\documentclass[12pt]{amsart}

\usepackage{amsfonts,amssymb,amsmath}
\usepackage{graphicx}
\usepackage{psfrag}
\usepackage{color,xcolor}
\usepackage{verbatim}
\usepackage{epstopdf}

\usepackage[colorlinks,citecolor=red,pagebackref,hypertexnames=false]{hyperref}

\newtheorem{thm}{Theorem}
\newtheorem{lem}[thm]{Lemma}

\theoremstyle{remark}

\newcommand{\R}{{\mathbb R}}
\newcommand{\Z}{{\mathbb Z}}

\newcommand{\bfx}{{\mathbf x}}

\newcommand{\card}{{\rm card\,}}
\def\leb{\mathcal{L}}
\newcommand{\dist}{{\rm dist\,}}

\begin{document}

\title{Positive measure and level sets of the Takagi function}

\author{Lai Jiang}
\address{Hangzhou Institute for Advanced Study, UCAS, Hangzhou 310024}
\email{jianglai@ucas.ac.cn}

\subjclass[2020]{Primary 26A27; Secondary 28A80}

\date{}

\keywords{Takagi function, level sets, finite cardinalities, Lebesgue measure.}

\begin{abstract}

Let $T$ be the classical Takagi function, and $L(y):=\{ x \in [0,1] : T(x) = y \}$ be its level set at height $y$.
For each positive integer $m$, let $S_m$ be the set of $y$ for which $L(y)$ has exactly $m$ points.
We prove that $S_{2n}$ has positive Lebesgue measure for every positive integer $n$, confirming a conjecture of Allaart.
\end{abstract}

\maketitle

\section{Introduction}

In 1903, Takagi introduced a continuous nowhere differentiable
function defined by
$$
	T(x) := \sum_{i=0}^{\infty} \frac{\phi(2^i x)}{2^i}, \qquad x \in [0,1].
$$
Here $\phi(x) := \dist ( x, \Z )$ denotes the distance from $x$ to the nearest integer. 
The function $T$ is called the classical Takagi function.

Allaart and Kawamura~\cite{AK11} surveyed the history, properties and applications of the Takagi function.
Among these, the structure of its level sets has attracted particular attention. 
Kahane~\cite{K59} showed that the maximum of $T$ is $2/3$.

For every $y \in \R$, define
$$
	L(y) := \{ x \in [0,1] : T (x)=y \},
$$
which is called the \emph{level set} at height $y$.
Buczolich~\cite{B08} proved that $L(y)$ is finite for Lebesgue almost every $y$ in the range of $T$.
Allaart~\cite{A13} showed that every finite level set has even cardinality by symmetry.
de Amo et al.~\cite{ABDF11} proved that every level set has Hausdorff dimension at most $1/2$.
Lagarias and Maddock introduced local level sets in~\cite{LM12}, and their work was extended by Jiang, Ying and Zhang~\cite{JYZ25}.
For further work on level sets of the Takagi function and its generalizations, see~\cite{A14,LM11,Y20}.

For every $m \in \Z^+$, write
$$
	S_m := \{y\in[0,2/3]:\card L(y)=m\}.
$$
Allaart proved that $S_{2n}$ is nowhere dense~\cite{A12} and uncountable~\cite{A13} for every $n\in\Z^+$.
In~\cite{A13}, he further showed that $S_{2n}$ has positive Lebesgue measure if $2n=2^p\pm2^q$ for some $p,q\in\Z^+$, 
and conjectured that this holds for every $n\in\Z^+$.
We prove this conjecture.


\begin{thm}\label{thm:main}
For every $n\in\Z^+$, we have $\leb (S_{2n}) > 0$.
Here $\leb$ denotes one-dimensional Lebesgue measure.
\end{thm}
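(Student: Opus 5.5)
We build, for each $n$, an explicit interval-type set of heights $y$ of positive measure on which $\card L(y)=2n$. We use the self-affine structure of $T$. On a dyadic interval $I=[k2^{-m},(k+1)2^{-m}]$ we have
$$
T(x)=T(k2^{-m})+\ell_I(x)+2^{-m}T(2^m x-k),
$$
where $\ell_I$ is linear with integer slope $s_I=\sum_{i<m}(1-2\varepsilon_i)$. Here the $\varepsilon_i$ are the binary digits of $k$. When $s_I=0$, the graph of $T$ over $I$ is a $2^{-m}$-scaled copy of the whole graph, shifted to height $T(k2^{-m})$. Following Allaart, such an $I$ contributes to $L(y)$ exactly a scaled copy of $L\bigl(2^m(y-T(k2^{-m}))\bigr)$.

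The strategy has four steps.
\begin{enumerate}
\item Decompose $[0,1]$ into the maximal dyadic intervals with slope $0$, the \emph{balanced} intervals. Off these intervals the local level structure is controlled by Allaart's description. For $y$ in a suitable range, each point of $L(y)$ lies in some balanced interval $I$, or is one of finitely many "isolated" points whose number is determined combinatorially.
\item For such $y$, write
$$
\card L(y)=\sum_I \card L\bigl(2^{m_I}(y-h_I)\bigr),
$$
summed over the balanced intervals $I$ whose height window $[h_I,h_I+2^{-m_I}\cdot 2/3]$ contains $y$.
\item Use induction on $n$, the base case being $S_2$ of positive measure, which is known. It then suffices to find a height window where exactly $j$ balanced intervals at comparable scales overlap, and where the rescaled heights land simultaneously in sets $S_{2a_1},\dots,S_{2a_j}$ with $\sum a_i=n$, on a set of $y$ of positive measure.
\item The simplest choice is to take one balanced interval whose rescaled height lies in $S_{2(n-1)}$ (or in $S_{2(n-k)}$), together with additional isolated contributions totalling $2k$. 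Here $k$ is chosen with $2k$ of the form $2^p\pm 2^q$, or $k=1$, where Allaart's result already applies. Positivity of measure is preserved by the affine rescaling $y\mapsto 2^m(y-h)$. This holds provided that on the relevant $y$-window the number of remaining preimages is constant and equal to the prescribed value.
\end{enumerate}

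The key concrete step is locating a region of the graph where the extra preimage count is exactly $2$. A natural candidate is a balanced interval sitting just below a steep local branch: near $x=0$ we have $T(x)\approx x\log_2(1/x)$, which is monotone and contributes one preimage on each side by symmetry. The window is chosen so that the level line $y$ crosses this branch once on each side of $1/2$ and otherwise meets only the chosen balanced copy. This yields the induction
$$
S_{2(n-1)}\ \text{positive measure}\implies S_{2n}\ \text{positive measure}.
$$

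The main obstacle is the sub-claim that a positive-measure subset of $S_{2(n-1)}$, rescaled, sits inside a window where nothing else intersects. The set $S_{2(n-1)}$ is nowhere dense, so we cannot just pick an open interval. Instead we need a uniform window $J$ and a scale $m$ with the following property: for all $y\in h+2^{-m}J$, the remainder of the graph meets the level $y$ in exactly two points, and moreover
$$
\leb\bigl(S_{2(n-1)}\cap J\bigr)>0.
$$
We would obtain this via Lebesgue density: pick a density point $y_0$ of $S_{2(n-1)}$ in the interior of $(0,2/3)$, then choose the balanced interval, by varying $k$ among dyadics with slope zero, so that its shifted window around $y_0$ avoids the other branches. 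Verifying that such a $k$ exists for every $y_0$ requires a careful count using Allaart's slope/digit combinatorics, and we expect this to be the hardest part of the argument.
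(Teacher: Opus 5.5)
There is a genuine gap: both concrete claims your induction $S_{2(n-1)}\Rightarrow S_{2n}$ rests on fail.

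\textbf{The count is wrong.} Because $T(x)=T(1-x)$, a balanced dyadic interval $I\subset[0,1/2]$ at height $h$ always comes with its mirror $1-I\subset[1/2,1]$. The mirror is again dyadic and balanced (its slope is $-s_I=0$), and it has the same height $h$. Moreover, no dyadic interval of length $<1$ contains $1/2$ in its interior. So the level line can never meet ``only the chosen balanced copy''. If $z=2^m(y-h)\in S_{2(n-1)}$, the two copies contribute $4(n-1)$ points, and your configuration gives $\card L(y)=4(n-1)+2=4n-2$, not $2n$. What your picture really produces is the map $2m\mapsto 4m+2$. Starting from $S_2$, this alone only reaches $6,14,30,\dots$, which are values $2^p-2$ already covered by Allaart. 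The missing idea is a second map $2m\mapsto 4m$, combined with induction along the binary expansion of $n$ (write $n=2q$ or $n=2q+1$). In the paper both maps come from Allaart's relation $\card L(y)=\card L(\Psi(y))+2\card L(\Phi(y))$ for $y\in I_k$ with $\Phi(y)>1/6$; the condition $\Phi(y)>1/6$ removes the deeper copies $4^j\Phi(y)$, $j\ge1$. For $z\in A:=S_{2m}\cap(1/6,2/3)$:
\begin{itemize}
\item $y=1/2+z/4$ gives $\card L(y)=2\card L(z)$;
\item $y=t_k+4^{-k}z$ gives $\card L(y)=\card L(4^{1-k}z)+2\card L(z)$.
\end{itemize}
The set $A$ has positive measure by a normalization lemma that pushes mass of $S_{2m}$ from $I_k$ into $I_{k-1}$.

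\textbf{The ``main obstacle'' sub-claim is false, not merely hard.} Suppose the rest of the graph met every level $y$ of an open window in exactly two points $x,1-x$. Since $y\ne h$ and $y\ne 0$, the point $x$ lies in the interior of the complement of $I\cup(1-I)$. Then $T$ would be injective on a small interval around $x$, hence strictly monotone there. This is impossible for a nowhere differentiable function. Equivalently, in Allaart's decomposition the remaining count is $\card L(\Psi(y))$ with $\Psi$ affine, and $S_2$ is nowhere dense. For the same reason, ``$T(x)\approx x\log_2(1/x)$ is monotone'' is only an asymptotic statement. ``Exactly two extra preimages'' can hold only on a set of large measure, never on an interval, so you need a measure-theoretic substitute: $S_2$ has Lebesgue density $1$ at $0$. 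This follows from Allaart's bound $\leb(I_k\setminus S_2)\le 2^{-(k+1)}-2^{1-2k}$; summing over $k\ge p$ gives $\leb((0,\rho)\setminus S_2)/\rho\le 1/p$ for $t_p\le\rho<t_{p-1}$. Then $A_k:=\{z\in A:4^{1-k}z\in S_2\}$ satisfies $\leb(A\setminus A_k)\to0$, and $t_k+4^{-k}A_k\subset S_{4m+2}$ has positive measure. Picking density points of $S_{2(n-1)}$ and searching over balanced intervals, as you propose, cannot replace this step.
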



\section{Proof of the theorem}

We first recall some notation and results of Allaart~\cite{A13,A11S}.
For every $k\in\Z^+$, we write $t_k := k/2^k$. 
Note that $t_1=t_2= 1/2 $ and that $\{t_k\}_{k\geq 2}$ is strictly decreasing. 
Define $I_2 := [1/2,2/3]$ and
$$
	I_k := [t_k,t_{k-1}), \qquad k \geq 3.
$$
Then $\{ I_k : k \geq 2 \}$ forms a partition of $(0,2/3]$.

\begin{lem}[{\cite[Lemma 5.3]{A11S}}]\label{lem:allaart-sigma}
For every $k \geq 4$, we have
$$
	\leb(I_k\setminus S_2)\leq \frac{1}{2^{k+1}}-\frac{1}{2^{2k-1}}.
$$
\end{lem}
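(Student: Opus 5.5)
The plan is to locate, for $y\in I_k$, all solutions of $T(x)=y$ in $[0,1/2]$, and to show that extra solutions can only occur for $y$ in a short window of $I_k$. By the symmetry $T(1-x)=T(x)$, we have $y\in S_2$ exactly when $T(x)=y$ has a unique solution in $[0,1/2]$. The basic tool is the self-affinity near $0$: for $x\in[0,2^{-m}]$,
$$
T(x)=mx+2^{-m}T(2^m x).
$$
This formula holds because for $i<m$ the points $2^ix$ lie in $[0,1/2]$, so $\phi(2^ix)=2^ix$.

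First I would use this formula to show that $\max_{[0,2^{-m}]}T=(m+2/3)2^{-m}$, and more precisely to describe $T$ on $[2^{-m},2^{-m+1}]$. Comparing with $t_k=k2^{-k}$, every solution of $T(x)=y$ with $y\in I_k$ should lie in $[2^{-(k-1)},1/2]$. On the initial dyadic pieces, $T$ is a linear term of slope $\ge k-2$ plus a small Takagi copy, so it is strictly increasing in the coarse sense. This gives one "main" crossing $x_0$, where the graph first reaches height $y$.

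Second, I would identify where a second crossing in $[0,1/2]$ can arise. This can only happen where a scaled Takagi bump rises above the linear trend by more than the trend climbs. Concretely, I expect the bad heights to be confined to a single interval $J_k\subset I_k$ of length $2^{-(k+1)}$, coming from the top bump near the right end of the increasing stretch. There the local copy is $2^{-(k-1)}T(2^{k-1}\,\cdot)$ added to a line of slope $k-1$, and the height window where this copy can produce a second preimage has length $\frac{2}{3}\cdot 2^{-(k-1)}$ minus the linear rise. I would compute this to be exactly $2^{-(k+1)}$. This gives the crude bound $\leb(I_k\setminus S_2)\le 2^{-(k+1)}$.

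Third, to obtain the correction $-2^{-(2k-1)}$, I would find inside $J_k$ a subinterval of length $2^{-(2k-1)}=2^{-(k-1)}\cdot 2^{-k}$ consisting of $S_2$ heights. This should be the image, under the affine rescaling $y'\mapsto c+2^{-(k-1)}y'$ coming from the local copy, of a piece of the next-scale good region. Its length is of the form $2^{-(k-1)}\cdot 2^{-k}$. For heights in this piece, the rescaled equation has a unique solution near its own origin, and the main branch contributes nothing new. The main obstacle is this step: verifying the exact endpoints, and checking that in this subinterval the linear part and the bump do not create any additional crossing elsewhere in $[2^{-(k-1)},1/2]$. This is where $k\ge4$ should be needed, so that the slope $k-2\ge2$ dominates the oscillation of the smaller copies. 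I would first try to carry out this check by a direct comparison of $T$ with the two lines of slopes $k-1$ and $k-2$ bounding the graph on the relevant dyadic intervals.
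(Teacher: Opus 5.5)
Your plan has a genuine gap: its central structural claim is false. The introduction of the paper recalls that Allaart proved $S_{2n}$ is nowhere dense for every $n$, so in particular $S_2$ is. Hence $I_k\setminus S_2$ is dense in $I_k$. It therefore cannot sit inside one interval $J_k$ of length $2^{-(k+1)}<|I_k|=(k-2)2^{-k}$, and there is no subinterval of $S_2$-heights of length $2^{-(2k-1)}$, or of any positive length.

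Concretely, take $k=4$. Lemma~\ref{lem:allaart} puts both of the following intervals in $I_4\setminus S_2$:
\[
[1/4,\,1/4+1/384], \qquad [11/32,\,11/32+1/384].
\]
On the first, $\Phi(y)\le 2/3$. On the second, $\Psi(y)\in[3/8,3/8+1/96]\subset I_3\setminus S_2$. These two intervals are more than $2^{-5}$ apart.

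Your first step is also off. From $T(x)=mx+2^{-m}T(2^mx)$ one gets $T(2^{-j}+v)=t_j+\frac14T(4v)$ for $0\le v\le 2^{-j}$. So $T\ge t_{k-1}>y$ on $[2^{-(k-1)},1/2]$, and there are \emph{no} solutions there. The solutions in $[0,1/2]$ lie in $[0,2^{-(k-1)})$, and one of them lies in $[2^{-k},2^{-(k-1)})$ by the intermediate value theorem. Likewise $\max_{[0,2^{-m}]}T=t_m+\frac23 4^{-m}$, not $(m+\frac23)2^{-m}$; your formula gives $5/6>2/3$ already for $m=1$.

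What is missing is the renewal structure behind Lemma~\ref{lem:allaart}, which the paper imports from Allaart together with this lemma (the paper gives no proof of its own). On $[2^{-k},2^{-(k-1)}]$ one has $T(x)=t_k+\frac14T(4(x-2^{-k}))$. On $[0,2^{-k}]$ the graph stays at or below $t_k+\frac23 4^{-k}$, which is the maximum of the small copy $t_k+4^{-k}T(4^k(2^{-k}-x))$ on $[2^{-k}-4^{-k},2^{-k}]$. Hence for $y\in I_k$, $y\in S_2$ if and only if $\Phi(y)>2/3$ and $\Psi(y)\in S_2$. With $\sigma_k:=\leb(I_k\setminus S_2)$ this gives
\[
\sigma_k=\frac23 4^{-k}+\frac14\leb\Big(\big(\tfrac83 4^{-k},t_{k-2}\big)\setminus S_2\Big)\le\frac23 4^{-k}+\frac14\sum_{j\ge k-1}\sigma_j ,
\]
where for $k=3$ the sum runs over $j\ge 3$.

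The target bound $b_k:=2^{-k-1}-2^{1-2k}$ satisfies the right-hand relation with \emph{equality} for all $k\ge3$. So the term $-2^{1-2k}$ is just the particular solution for the forcing $\frac23 4^{-k}$, not a window of good heights.

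Because $(a+bk)2^{-k}$ solves the homogeneous relation, the inequality alone does not give $\sigma_k\le b_k$. Instead, induct on $n$ for $\sigma_k^{(n)}$, the measure of those $y\in I_k$ whose $\Psi$-orbit enters $\{\Phi\le 2/3\}$ within $n$ steps:
\begin{itemize}
\item The base case is $\sigma_k^{(0)}=\frac23 4^{-k}\le b_k$ for $k\ge3$.
\item The same recursion propagates $\sigma_k^{(n)}\le b_k$ from $n$ to $n+1$.
\item Finally $\sigma_k^{(n)}\uparrow\sigma_k$. An orbit that never enters a hole gives a unique solution in $[0,1/2]$, because at each step the solution set is carried bijectively by $x\mapsto 4(x-2^{-k})$, which expands distances by $4$.
\end{itemize}
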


\begin{lem}[{\cite[Theorem 4.5]{A13}}]\label{lem:allaart-S2}
We have $5/12<\leb(S_2)<35/72$.
\end{lem}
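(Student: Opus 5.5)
The plan is to split $S_2$ along the partition $\{I_k\}_{k\ge2}$ of $(0,2/3]$, writing $\leb(S_2)=\sum_{k\ge2}\leb(S_2\cap I_k)$. The tail $k\ge4$ is handled by Lemma~\ref{lem:allaart-sigma}, and the two long intervals $I_2=[1/2,2/3]$ and $I_3=[3/8,1/2)$ are handled by direct analysis. Since $|I_k|=t_{k-1}-t_k=(k-2)/2^k$ and $\sum_{k\ge4}(k-2)/2^k=t_3=3/8$, Lemma~\ref{lem:allaart-sigma} gives
$$
\sum_{k\ge4}\leb(S_2\cap I_k)\ \ge\ \frac38-\sum_{k\ge4}\frac1{2^{k+1}}+\sum_{k\ge4}\frac{1}{2^{2k-1}}=\frac38-\frac1{16}+\frac1{96}=\frac{31}{96}.
$$
So the lower bound $5/12=40/96$ reduces to showing $\leb(S_2\cap(I_2\cup I_3))>9/96$.

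For the upper bound $\leb(S_2)<35/72$, it suffices to exhibit a set of measure $>2/3-35/72=13/72$ on which $\card L(y)\ge4$ or $L(y)$ is infinite. For the tail, I will use the trivial bound $\leb(S_2\cap I_k)\le|I_k|$. So the real work is again concentrated in $I_2\cup I_3$ and perhaps $I_4$, where the complement of $S_2$ must be shown to be large.

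The tool for analysing $I_2$ and $I_3$ is the self-affinity of $T$. Since $T(1-x)=T(x)$, we have $y\in S_2$ iff $L(y)\cap[0,1/2]$ is a single point. On $[0,1/2]$ we use $T(x)=x+\tfrac12T(2x)$. More generally, on a dyadic interval $[j/2^n,(j+1)/2^n]$ the graph of $T$ is an affine image of the whole graph: it is a copy scaled by $2^{-n}$ plus a linear function whose slope is the number of $0$'s minus the number of $1$'s among the first $n$ binary digits.

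For $y\in I_2$, level points in $[0,1/2]$ lie in $[1/4,1/2]$, where $T(x)=x+\tfrac12T(2x)$ and $T(2x)$ is a reflected copy on $[1/2,1]$. Counting solutions then becomes counting intersections of a copy of the graph with a line of slope $-2$ in rescaled coordinates. Iterating this decomposition a few generations produces, on the $y$-axis, explicit finite unions of intervals with the following two properties.
\begin{itemize}
\item On the first kind of interval, every dyadic piece except one lies strictly above or strictly below $y$. Hence $L(y)$ has exactly two points, by monotonicity-type estimates using the bounds $0\le T\le 2/3$ on each copy.
\item On the second kind, two disjoint dyadic copies each reach height $y$, for instance just below the values of local maxima such as $T(1/3)=2/3$ restricted to sub-copies. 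Hence $\card L(y)\ge4$.
\end{itemize}
Summing the lengths of the first kind over $I_2\cup I_3$ should give more than $9/96$. Summing the lengths of the second kind over $I_2\cup I_3\cup I_4$ should give more than $13/72$.

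The main obstacle is this finite but delicate bookkeeping in $I_2$ and $I_3$. I expect to need several generations of the self-affine decomposition before the certified $S_2$-intervals and certified non-$S_2$-intervals are large enough to beat the two thresholds, since the margins are only about $0.01$. My first attempt would be to go to generation $3$ or $4$, checking each candidate interval with the crude bounds $\min T=0$ and $\max T=2/3$ on each dyadic copy. I would refine only those pieces where the crude bounds are inconclusive.
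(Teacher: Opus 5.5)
The paper does not prove this lemma; it quotes Allaart's Theorem~4.5. Your tail estimate $31/96$ is correct, and so is your reduction of the lower bound to $\leb\big(S_2\cap(I_2\cup I_3)\big)>9/96$. The gap is that the method you propose for verifying that inequality cannot work. You want explicit finite unions of $y$-intervals on which $\card L(y)=2$. But $S_2$ is nowhere dense (Allaart's result quoted in the introduction), so it contains no interval at all. The faulty step is ``every dyadic piece except one misses height $y$, hence $L(y)$ has exactly two points''. The surviving piece is an affine copy of the whole graph plus a linear term, and it typically crosses height $y$ several times. Such a function is monotone on no interval, so crude bounds like $0\le T\le 2/3$ at any finite generation can never certify a single crossing for all $y$ in an interval. (Also, for $y\in I_2$ the level points in $[0,1/2]$ need not lie in $[1/4,1/2]$: $T(15/64)=17/32$.) The upper-bound half, which certifies $\card L(y)\ge 4$ on intervals, is viable in principle, though you do not say which intervals you would use.

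The missing idea is to replace the geometric bookkeeping by the cardinality relation of Lemma~\ref{lem:allaart}. For $y\in I_2$ one has $\Phi(y)\in[0,2/3]$, so $\card L(y)\ge 2\card L(\Phi(y))\ge 4$, and hence $S_2\cap I_2=\emptyset$. For $y\in I_k$ with $k\ge3$, $\card L(y)=2$ if and only if $\Phi(y)>2/3$ and $\Psi(y)\in S_2$; in particular $[t_k,t_k+\frac23 4^{-k}]$ misses $S_2$. For $k=3$, $\Psi$ maps $\{y\in I_3:\Phi(y)>2/3\}$ affinely, with factor $4$, onto $(1/24,1/2)\supset I_3$. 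Therefore
\[
\leb(I_3\setminus S_2)=\frac1{96}+\frac14\Big(\leb(I_3\setminus S_2)+\leb\big((\tfrac1{24},\tfrac38)\setminus S_2\big)\Big),
\]
that is, $\leb(I_3\setminus S_2)=\frac1{72}+\frac13\leb\big((\frac1{24},\frac38)\setminus S_2\big)$. By Lemma~\ref{lem:allaart-sigma}, $\leb\big((0,\frac38)\setminus S_2\big)\le\frac1{16}-\frac1{96}=\frac5{96}$. Moreover $(0,\frac1{24})\setminus S_2$ has positive measure, since it contains $[t_9,t_9+\frac23 4^{-9}]$. Hence $\leb(I_3\setminus S_2)<\frac1{72}+\frac5{288}=\frac1{32}$, i.e.\ $\leb(S_2\cap I_3)>\frac9{96}$, which is exactly your threshold. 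In the other direction the same identity gives $\leb(I_3\setminus S_2)\ge\frac1{72}$. Adding $\leb(I_2)=\frac16$ and the positive measure of $[t_4,t_4+\frac23 4^{-4}]$ yields $\leb(S_2)<\frac23-\frac16-\frac1{72}=\frac{35}{72}$.
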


For every $y \in I_k$ with $k\geq3$, set
$$
\Psi(y):=4(y-t_k), \qquad \Phi(y):=4^k(y-t_k).
$$
For every $y \in I_2$, we write $\Phi(y) := 4 (y- 1/2 )$.
We recall the following cardinality relations.

\begin{lem}[{\cite[Corollary 3.2]{A13}}]\label{lem:allaart}
Fix $0 < y \leq 2/3$.
If $y \in I_2$, then
$$
	\card L(y) = 2 \sum_{j=0}^{\infty}\card L\big(4^j \Phi(y) \big).
$$
If $y\in I_k$ for some $k \geq 3$, then
$$
	\card L(y) = \card L\big( \Psi(y) \big) + 2 \sum_{j=0}^{\infty}\card L\big( 4^j \Phi(y) \big).
$$
\end{lem}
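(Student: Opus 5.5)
The plan is to derive both identities from the self-affine functional equations of $T$, counting preimages of $y$ piece by piece. The facts I would use are the symmetry $T(1-x)=T(x)$ and the doubling relation $T(x/2)=x/2+T(x)/2$ on $[0,1]$. Iterating the latter gives, for $m\ge 1$ and $x\in[0,2^{-m}]$,
$$
T(x)=mx+2^{-m}T(2^m x).
$$
From these I would also derive the ``quarter'' relation $T\big(\tfrac14+\tfrac u4\big)=\tfrac12+\tfrac14T(u)$ for $u\in[0,1]$, and more generally the similarity relations near the dyadic points where the partial sums $\sum_{i<2j}\phi(2^ix)/2^i$ have slope zero. By symmetry, $\card L(y)$ is twice the number of solutions in $[0,1/2)$, except for the single point $x=1/2$, which lies in $L(y)$ only when $y=1/2$. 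So it suffices to count solutions in $[0,1/2)$.

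\emph{Step 1 (the case $y\in I_2$).} I would first show that $T<1/2$ on $[0,1/4)\cup(3/4,1]$. This should follow from $T(x)=2x+T(4x)/4$ on $[0,1/4]$ together with the fact that the partial sum $2x$ stays below $1/2$ there. Hence every solution with $y\ge 1/2$ lies in $[1/4,3/4]$. On that interval the graph is a copy of the graph of $T$, scaled by $1/4$ and lifted by $1/2$, but further flat-slope subcopies sit inside it, at the points $1/4+4^{-j}\cdot(\cdot)$. I would unfold this nesting, splitting the solutions according to the first generation $j$ at which the binary digits of $x$ stop being ``balanced''. The solutions in generation $j$ are then in bijection with $L(4^j\Phi(y))$, restricted to the appropriate half. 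This yields $\card L(y)=2\sum_{j\ge0}\card L(4^j\Phi(y))$. The sum is effectively finite whenever $\Phi(y)>0$, because $L(z)=\emptyset$ for $z>2/3$.

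\emph{Step 2 (the case $y\in I_k$, $k\ge3$).} Here I would use the iterated doubling relation with $m=k-1$ and $m=k$, restricted to $[2^{-k},2^{-k+1}]$. On this interval the linear part $mx$ of $T$ passes through height $t_k=k2^{-k}$ exactly at the left endpoint. The solutions in $[0,1/2)$ should then split into two groups. The first group lies in the region where the graph is a copy of $T$ scaled by $1/4$ sitting over height $t_k$; matching $y$ there gives a bijection with $L(4(y-t_k))=L(\Psi(y))$, and symmetry of that region accounts for the missing factor $2$. The second group is a nested chain of copies of the graph scaled by $4^{-k-j}$ at height $t_k$, contributing $\card L(4^j\Phi(y))$ for each $j$, with the factor $2$ coming from the reflection $x\mapsto 1-x$. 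Finally, one checks that the linear pieces with nonzero slope in between contribute nothing. The reason is that for $y\in[t_k,t_{k-1})$ the value of $T$ on the complementary pieces either exceeds $t_{k-1}$ or stays below $t_k$. That check uses the bounds $\max T=2/3$ and $y<t_{k-1}$.

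\emph{Main obstacle.} I expect the main obstacle to be this exclusion argument, together with the careful bookkeeping of endpoints. One must show that no solution is counted twice at the junctions between consecutive copies, which happens exactly at dyadic rationals where $\Phi(y)$ or $4^j\Phi(y)$ takes the value $0$ or $2/3$, and that no solution is missed on the sloped pieces. I would handle this by proving explicit upper bounds for $T$ on each complementary dyadic interval via the identity $T(x)=mx+2^{-m}T(2^mx)$ together with $T\le 2/3$. These bounds should show the graph there stays strictly outside the band $[t_k,t_{k-1})$. The boundary case $y=t_k$, where $\Phi(y)=0$ and $L(0)=\{0,1\}$, would be verified separately by direct inspection.
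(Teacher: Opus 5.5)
The paper does not reprove this lemma; it imports it from Allaart. Your plan of cutting the graph into self-affine pieces is the right kind of argument, but Step~1 rests on a false claim, and the $\Psi$-term in Step~2 is miscounted as written.

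\textbf{Step 1.} It is not true that $T<1/2$ on $[0,1/4)$. From $T(x)=2x+T(4x)/4$ and symmetry one gets $T(3/16+u/16)=1/2+T(u)/16$ for $u\in[0,1]$, so for example $T(7/32)=17/32$. Your justification discards the term $T(4x)/4$, which can be as large as $1/6$. As a result you look for the $j\geq1$ solutions in the wrong place. On $[1/4,1/2]$ the identity $T(1/4+u/4)=1/2+T(u)/4$ is an exact similarity of the whole graph. So $L(y)\cap[1/4,1/2]$ is in bijection with $L(\Phi(y))$ and nothing else, and unfolding further generations inside it would count the same points twice.

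The missing solutions lie to the left of $1/4$. Writing $x=(1-v)/4$ gives $T(x)-1/2=\big(T(v)-2v\big)/4$, so $T(x)\geq1/2$ forces $v\leq1/3$, i.e.\ $x\geq1/6$. Set $a_j:=1/6+4^{-j}/12$. Then $T(a_j+4^{-j-1}u)=1/2+4^{-j-1}T(u)$ for $u\in[0,1]$ and all $j\geq0$. The piece $j=0$ is $[1/4,1/2]$. For $j\geq1$ the intervals $[a_j,a_{j-1}]$ (namely $[3/16,1/4]$, $[11/64,3/16]$, \dots) tile $(1/6,1/4]$. These pieces and their mirror images give $2\sum_{j\geq0}\card L(4^j\Phi(y))$ for $y>1/2$. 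There is no double counting, since $T(a_j)=1/2$ at the junctions.

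\textbf{Step 2.} This step has the right shape, and Step~1 is really its case $k=2$. There is a chain of copies at height $t_k$, of widths $4^{-k-j}$, running leftward from $2^{-k}$; the first is $T(2^{-k}-4^{-k}+4^{-k}u)=t_k+4^{-k}T(u)$. This is followed by one more piece over $[2^{-k},2^{1-k}]$, and that piece is where the bookkeeping goes wrong.

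The identity there is $T(2^{-k}+s/4)=t_k+T(s)/4$, valid only for $s\in[0,2^{2-k}]$. So the piece is a copy of $T$ restricted to $[0,2^{2-k}]$, not a copy of $T$, and it matches $L(\Psi(y))\cap[0,2^{2-k}]$. To identify this with $L(\Psi(y))\cap[0,1/2]$, which is exactly half of $L(\Psi(y))$, you need two facts:
\begin{itemize}
\item $\Psi(y)<4(t_{k-1}-t_k)=t_{k-2}$;
\item $T\geq t_m$ on $[2^{-m},1/2]$, which follows by an easy induction from the same identity.
\end{itemize}
The mirror piece supplies the other half. That, and not a symmetry of the region itself, is why $\card L(\Psi(y))$ carries no factor $2$. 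As you wrote it, a bijection with all of $L(\Psi(y))$ inside $[0,1/2)$ followed by doubling would give $2\card L(\Psi(y))$.

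For $k=2$ one has $2^{2-k}=1$, so the piece $[1/4,1/2]$ is a full copy of $T$ and merges with the $j=0$ term. This is exactly why the $I_2$ formula takes the different form $2\sum_{j\geq0}\card L(4^j\Phi(y))$.
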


The following lemma can be directly obtained from~\cite{A13,A11S}.

\begin{lem}\label{lem:reduction}
Let $k\geq 3$ and $y \in I_k$. If $\Phi(y)>1/6$, then
$$
	\card L(y) = \card L\big( \Psi(y) \big) + 2 \card L\big( \Phi(y) \big).
$$
\end{lem}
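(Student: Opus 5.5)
The plan is to derive this directly from the second identity in Lemma~\ref{lem:allaart}: I will show that, under the hypothesis $\Phi(y)>1/6$, every term of the infinite sum with $j\geq 1$ is zero. Only the $j=0$ term then survives, and the stated formula follows.

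Concretely, fix $k\geq 3$ and $y\in I_k$ with $\Phi(y)>1/6$. Since $y\in I_k\subset(0,2/3]$, Lemma~\ref{lem:allaart} applies and gives
$$
\card L(y) = \card L\big(\Psi(y)\big) + 2\sum_{j=0}^{\infty}\card L\big(4^j\Phi(y)\big).
$$
For every $j\geq 1$ we have
$$
4^j\Phi(y)\geq 4\Phi(y) > 4\cdot\tfrac16=\tfrac23 .
$$
By Kahane's result quoted in the introduction, $\max_{[0,1]}T = 2/3$. So no $x\in[0,1]$ satisfies $T(x)=4^j\Phi(y)$, hence $L\big(4^j\Phi(y)\big)=\emptyset$ and $\card L\big(4^j\Phi(y)\big)=0$ for all $j\geq1$. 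The sum therefore reduces to its $j=0$ term $\card L\big(\Phi(y)\big)$, which is exactly the claimed identity.

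There is no serious obstacle; the only point needing care is bookkeeping. The identity of Lemma~\ref{lem:allaart} must be read with the convention that $\card L(z)$ is defined, and equal to $0$, for heights $z>2/3$. This holds because $L(z)$ is defined for every $z\in\R$. Note also that $\Phi(y)$ itself may exceed $2/3$, in which case the $j=0$ term is $0$ as well, and the formula still holds as stated.
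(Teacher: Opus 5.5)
Your proposal is correct and matches the paper's proof. Both arguments apply the second identity of Lemma~\ref{lem:allaart} and note that $4^j\Phi(y)>2/3=\max T$ for every $j\geq 1$, so that all terms with $j\geq1$ vanish.
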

\begin{proof}
Since $\Phi(y)>1/6$, we have $4^j\Phi(y)>2/3$ for every $j \geq 1$. 
Hence $ L( 4^j \Phi(y) ) =\emptyset$ for all $j \geq 1$.
Then the conclusion follows from Lemma~\ref{lem:allaart}.
\end{proof}


\begin{lem}\label{lem:borel}
For every $m\in\Z^+$, the set $S_m$ is Borel.
\end{lem}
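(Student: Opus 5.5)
We prove that $S_m$ is Borel by exhibiting it as a countable Boolean combination of closed sets, using only the continuity of $T$ on the compact interval $[0,1]$. Consider the compact set $G := \{(x,y)\in[0,1]\times\R : T(x)=y\}$, the graph of $T$. For each $j\in\Z^+$, define
$$
A_j := \{ y\in\R : \card L(y)\ge j\}.
$$
Then $S_m = [0,2/3]\cap (A_m\setminus A_{m+1})$. So it suffices to show that each $A_j$ is Borel.

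To handle $A_j$, we use the following observation: $\card L(y)\ge j$ if and only if there exists $r\in\mathbb{Q}^+$ and points $x_1<\dots<x_j$ in $[0,1]$ with consecutive gaps $x_{i+1}-x_i\ge 1/r$ and $T(x_i)=y$ for all $i$. For fixed $r$, consider the set
$$
K_{j,r} := \{(x_1,\dots,x_j)\in[0,1]^j : x_{i+1}-x_i\ge 1/r \text{ for } 1\le i<j,\ T(x_1)=\dots=T(x_j)\}.
$$
This set is closed in $[0,1]^j$, hence compact, because $T$ is continuous. Let $B_{j,r}$ be its image under the continuous map $(x_1,\dots,x_j)\mapsto T(x_1)$. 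Then $B_{j,r}$ is compact.

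We then have $A_j=\bigcup_{r\in\mathbb{N}} B_{j,r}$, which is an $F_\sigma$ set. For $j=1$ the gap condition is vacuous, and $A_1=T([0,1])$ is compact. Consequently $A_m\setminus A_{m+1}$ is the difference of two $F_\sigma$ sets. Such a difference is Borel, and intersecting with $[0,2/3]$ preserves this.

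There is no substantial obstacle in this argument. The only point requiring care is that the strict inequality $x_1<\dots<x_j$ does not define a closed set. This is exactly why we replace it with the uniform gap condition $x_{i+1}-x_i\ge 1/r$ and then take the countable union over $r$: the gap condition makes each piece compact, and the union over $r$ recovers all strictly increasing tuples. Note also that the finiteness of $L(y)$ is not needed. If $L(y)$ is infinite, then $y\in A_j$ for every $j$, so $y\notin S_m$, which is consistent with the definition.
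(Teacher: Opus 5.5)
Your proof is correct and follows essentially the same route as the paper. The paper also writes $\{y:\card L(y)\ge m\}$ as a countable union over $r$ of closed sets of heights admitting $m$ preimages that are $1/r$-separated, and then takes the difference of the cases $m$ and $m+1$; it proves closedness by a sequential-compactness argument where you use the continuous image of a compact set.
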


\begin{proof}
Fix $m\in\Z^+$. 
For every $r\in\Z^+$ and $y\in[0,2/3]$, we call $\bfx=(x_1,\ldots,x_m) \in [0,1]^m $ $r$-admissible for $y$ if
$T(x_i)=y$ for every $1 \leq i \leq m$ and $|x_i-x_j| \geq 1/ r $ for all $i \neq j$.
Write
$$
	E_r := \big\{ y\in[0,2/3] : \exists \, \bfx \mbox{ which is } r\mbox{-admissible for } y \big\} .
$$

We first show that $E_{r}$ is closed.
Assume that $\{ y_n \}$ is a sequence contained in $E_r$ such that $ \lim_{n \to \infty} y_n = y $.
For every $n \in \Z^+$, pick $\bfx^{(n)}$ which is $r$-admissible for $y_n$.
Since $[0,1]^m$ is compact, the sequence $\{\bfx^{(n)}\}$ has a subsequence converging to some $\bfx=(x_1,\ldots,x_m)\in[0,1]^m$.

The continuity of $T$ gives $T(x_i)=y$ for every $1 \leq i \leq m$.
Moreover, for every $i\neq j$, $| x_i-x_j |\geq 1/r$.
Thus $\bfx$ is $r$-admissible for $y$, so $y\in E_r$.
It follows that $E_r$ is a Borel set.
Therefore the set
$$
	\big\{ y:\card L(y)\geq m \big\} =\bigcup_{r=1}^{\infty}E_r
$$
is Borel.
Since $m$ was arbitrary,
$$
	S_m =\{y:\card L(y)\geq m\} \setminus\{y:\card L(y)\geq m+1\}
$$
is also Borel.
\end{proof}

\begin{lem}\label{lem:density}
We have
\begin{equation}\label{eq:density-strong}
\lim_{\rho\to0^+}
\frac{\leb\big((0,\rho)\setminus S_2\big)}{\rho}=0.
\end{equation}
\end{lem}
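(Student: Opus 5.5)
The estimate follows by summing Lemma~\ref{lem:allaart-sigma} over the tail of the partition $\{I_k\}$. For small $\rho>0$, let $K=K(\rho)$ be the unique index with $\rho\in I_K$, so that $t_K\le\rho<t_{K-1}$. Since $I_k$ covers $[t_k,t_{k-1})$ and the $t_k$ decrease to $0$, we have
$$
(0,\rho)\subset \bigcup_{k\ge K} I_k .
$$
Hence, for $K\ge 4$, Lemma~\ref{lem:allaart-sigma} gives
$$
\leb\big((0,\rho)\setminus S_2\big)\le \sum_{k\ge K}\leb(I_k\setminus S_2)\le \sum_{k\ge K}\frac{1}{2^{k+1}}=\frac{1}{2^{K}}.
$$
(Lemma~\ref{lem:borel} guarantees that $S_2$ is measurable, so all these measures make sense.)

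We now compare this bound with $\rho$. Since $\rho\ge t_K=K/2^K$,
$$
\frac{\leb\big((0,\rho)\setminus S_2\big)}{\rho}\le \frac{2^{-K}}{K2^{-K}}=\frac1K .
$$
As $\rho\to0^+$, the index $K(\rho)\to\infty$, because $t_k\to0$ and each $I_k$ is bounded away from $0$. Therefore the ratio tends to $0$, which proves \eqref{eq:density-strong}.

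There is no real obstacle here. The only points needing care are the bookkeeping at the endpoint $\rho$, which lies inside $I_K$ and is handled by including all of $I_K$ in the cover, and the restriction $k\ge4$ in Lemma~\ref{lem:allaart-sigma}, which is harmless because we only consider $\rho<t_3$. The key observation is that the lengths $|I_k|\approx k/2^{k}$ are a factor $k$ larger than the bounds on the exceptional sets, $\approx 2^{-k-1}$. This factor is exactly what makes the relative density of the complement of $S_2$ near $0$ vanish.
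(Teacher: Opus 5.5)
Your proposal is correct and is essentially the paper's proof: you take the index $K$ with $t_K\le\rho<t_{K-1}$, cover $(0,\rho)$ by $\bigcup_{k\ge K} I_k$, sum Lemma~\ref{lem:allaart-sigma} to get at most $2^{-K}$, and divide by $\rho\ge t_K=K/2^K$ to get the bound $1/K$. The only difference is that you drop the $-2^{1-2k}$ term, which yields $\le$ where the paper has $<$; this does not matter.
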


\begin{proof}
For every sufficiently small $\rho>0$, there exists $p\geq4$ such that $t_p\leq\rho<t_{p-1}$.
Since $ \{ I_k : k\geq p \}$ is a partition of $(0,t_{p-1})$, Lemma~\ref{lem:allaart-sigma} gives
$$
0\leq \frac{\leb\big((0,\rho)\setminus S_2\big)}{\rho}
\leq \frac{1}{\rho}\sum_{k=p}^{\infty} \Big(\frac{1}{2^{k+1}}-\frac{1}{2^{2k-1}}\Big)
<\frac{1}{2^p\rho}
=\frac{t_p}{p\rho}
\leq\frac{1}{p}.
$$
When $\rho$ tends to $0^+$, $p$ tends to infinity.
Then the conclusion follows.
\end{proof}

\begin{lem}\label{lem:normalization}
If $m\in\Z^+$ and $\leb(S_m)>0$, then $\leb \big(S_m\cap [1/6,2/3] \big)>0$.
\end{lem}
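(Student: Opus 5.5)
The plan is to build an affine "lifting" map that pushes points of $(0,1/6)$ upward while preserving $\card L(\cdot)$, and to iterate it finitely many times until the image lands in $[1/6,2/3]$. Since affine maps with nonzero slope carry sets of positive measure to sets of positive measure, this will prove the lemma.

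\emph{The basic lifting step.} Fix $k\geq3$ and, for $y>0$, put $z:=t_k+y/4$. Then $\Psi(z)=y$ and $\Phi(z)=4^{k-1}y$. Suppose
$$
4^{k-1}y>2/3 \qquad\text{and}\qquad y<4(t_{k-1}-t_k)=\frac{k-2}{2^{k-2}}.
$$
The second condition gives $z\in I_k$. The first gives $4^j\Phi(z)>2/3$ for all $j\geq0$, so every term in the sum of Lemma~\ref{lem:allaart} is empty. Hence
$$
\card L(z)=\card L\big(\Psi(z)\big)=\card L(y).
$$
So the affine map $y\mapsto t_k+y/4$ sends $S_m\cap J_k$ into $S_m$, where
$$
J_k:=\Big(\tfrac{2}{3}4^{1-k},\ \tfrac{k-2}{2^{k-2}}\Big).
$$
Some checks on these intervals: $J_3=(1/24,2)$ contains $(1/24,1/6)$, and for $k=3$ the image lies in $[3/8,1/2)\subset[1/6,2/3]$. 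Moreover the intervals $J_k$ for $k\geq3$ cover $(0,1/6)$, because the left endpoints decrease to $0$ by factors of $4$, while the right endpoints are much larger than $4^{2-k}$.

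\emph{Iteration.} Suppose $\leb(S_m)>0$ but $\leb(S_m\cap[1/6,2/3])=0$. Since $\card L(0)=1$, at most one point is lost at $0$. So there is $\varepsilon>0$ with $\leb(S_m\cap(\varepsilon,1/6))>0$. For $y\in(0,1/6)$ let $k(y)$ be the least $k\geq3$ with $y\in J_k$, and set
$$
G(y):=t_{k(y)}+y/4 .
$$
Then $G$ is piecewise affine with slope $1/4$ on finitely many pieces over $(\varepsilon,1/6)$. By the step above, $G(S_m\cap(\varepsilon,1/6))\subset S_m$, and $G(y)>y$.

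The key quantitative point is that $G$ increases $y$ substantially. Minimality of $k$ gives $y\le \frac23 4^{2-k}$, while $G(y)\geq t_k=k2^{-k}$, which is of order $\sqrt{y}\log(1/y)$. So $G(y)\geq \min\{1/6,\,g(y)\}$ for an explicit increasing function $g$ with $g(y)>y$ on $(0,1/6)$, bounded away from the identity on $[\varepsilon,1/6]$. Therefore a bounded number $N=N(\varepsilon)$ of iterations of $G$ sends every point of $(\varepsilon,1/6)$ into $[1/6,2/3]$. The images stay below $2/3$ because $t_k+y/4<t_{k-1}\le 1/2$ for $k\ge3$.

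\emph{Conclusion.} Partition $S_m\cap(\varepsilon,1/6)$ into finitely many Borel pieces, using Lemma~\ref{lem:borel}, according to the sequence of $k$-values along the orbit. On each piece, the first iterate $G^{i}$ that lands in $[1/6,2/3]$ is a single affine map with slope $4^{-i}$. One of these pieces has positive measure, so its image is a positive-measure subset of $S_m\cap[1/6,2/3]$, a contradiction.

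The main obstacle is the bookkeeping in this last step. One must verify that the conditions $z\in I_k$ and $\Phi(z)>2/3$ hold at every stage of the iteration, and that the number of steps is uniformly bounded on $(\varepsilon,1/6)$. Both reduce to elementary inequalities between $t_k$ and $4^{-k}$.
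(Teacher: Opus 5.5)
Your proof is correct, and its engine is the same as the paper's. It uses an affine lift $y\mapsto t_j+y/4$ with $\Psi$ returning $y$ and $\Phi>2/3$, so that Lemma~\ref{lem:allaart} forces $\card L(t_j+y/4)=\card L(y)$. You differ from the paper in the choice of $j$, and hence in the bookkeeping.

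The paper always takes $j=k-1$ for $y\in I_k$, $k\ge4$. The single map $R_k(y)=t_{k-1}+y/4$ then sends all of $I_k$ into $I_{k-1}$. In your notation this is $I_k\subset J_{k-1}$, which is exactly what the inequalities $5t_{k-1}/4<t_{k-2}$ and $4^{k-2}t_k=k2^{k-4}>2/3$ check. This gives $\leb(S_m\cap I_{k-1})\ge\frac14\leb(S_m\cap I_k)$ and a trivial descent to $I_3=[3/8,1/2)$. No covering argument, growth estimate, or orbit partition is needed.

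Your greedy choice (least admissible $k$) roughly halves the index per step. That speed is irrelevant for a qualitative positivity statement, and it costs you the extra verification. If you keep your route, you can drop both $\varepsilon$ and the function $g$. For $k(y)\ge4$ you have $G(y)\ge t_k>\frac23 4^{2-k}$, so the index strictly decreases along each orbit. Every orbit therefore lands in $[1/6,2/3]$ within $k(y)-2$ steps, and a countable partition by orbit type suffices.

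There are two harmless slips. First, $J_3=(1/24,1/2)$, not $(1/24,2)$; the correct value is what your own claim that the image lies in $[3/8,1/2)$ requires. Second, $L(0)=\{0,1\}$, so $\card L(0)=2$; your argument only uses that $\{0\}$ is Lebesgue-null.
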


\begin{proof}
For every $k\geq4$, define a function on $I_k$ by $R_k(y) := t_{k-1}+ y /4$.
Since
$$
t_{k-1}<R_k(y)< 5 t_{k-1} /4 <t_{k-2}, \qquad y \in I_k,
$$
we have $R_k(I_k) \subset I_{k-1}$.
Moreover, we have $\Psi\big(R_k(y)\big)=y$ and
$$
\Phi\big(R_k(y)\big)
 =4^{k-2}y
 \geq k2^{k-4}>\frac{2}{3}.
$$
Therefore Lemma~\ref{lem:reduction} gives
$\card L(R_k(y))=\card L(y)$.
Consequently,
\begin{equation}\label{eq:rec-leb}
\leb(S_m\cap I_{k-1})
\geq \leb\big(R_k(S_m\cap I_k)\big)
=\frac{1}{4}\leb(S_m\cap I_k).
\end{equation}

Recall that $\{ I_k : k \geq 2 \}$ is a partition of $(0,2/3]$. 
Since $\leb(S_m)>0$, there is some $k\geq2$ such that
$\leb(S_m\cap I_k)>0$.
If $k\geq4$, by repeatedly applying \eqref{eq:rec-leb}, we obtain
 $\leb(S_m\cap I_3)>0$.
Thus, in either case,
$$
\leb\big(S_m\cap(I_2\cup I_3)\big)>0.
$$
As $I_2\cup I_3=[3/8,2/3] \subset [1/6,2/3]$, the conclusion follows.
\end{proof}

\begin{lem}\label{lem:binary}
Let $n\in\Z^+$. If $\leb(S_{2n}) > 0$, then we have both $\leb(S_{4n})>0$ and $\leb(S_{4n+2})>0$.
\end{lem}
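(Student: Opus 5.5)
The plan is to build, from a positive-measure piece of $S_{2n}$, affine copies of that piece that lie inside $S_{4n}$ and inside $S_{4n+2}$, using the cardinality relations of Lemma~\ref{lem:allaart} and Lemma~\ref{lem:reduction}. By Lemma~\ref{lem:normalization} the set $A:=S_{2n}\cap(1/6,2/3]$ has $\leb(A)>0$; the single point $1/6$ does not affect the measure. By Lemma~\ref{lem:borel} all sets involved are Borel, so the images and preimages under affine maps used below are measurable.

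\emph{The case $4n$.} For $z\in A$, put $y:=1/2+z/4$. Then $y\in[1/2+1/24,\,2/3]\subset I_2$ and $\Phi(y)=z$. Since $z>1/6$, we have $4^jz>2/3$ for every $j\ge1$, so those level sets are empty. Hence the first formula in Lemma~\ref{lem:allaart} gives
$$
\card L(y)=2\card L(z)=4n .
$$
So $S_{4n}\supset 1/2+A/4$, and therefore $\leb(S_{4n})\ge \leb(A)/4>0$.

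\emph{The case $4n+2$.} Fix a large $k\ge3$. For $z\in A$, put $y:=t_k+4^{-k}z$. Since $4^{-k}\cdot 2/3<t_{k-1}-t_k=(k-2)/2^k$, we get $y\in I_k$. Moreover $\Phi(y)=z>1/6$ and $\Psi(y)=4^{1-k}z$. Lemma~\ref{lem:reduction} then yields
$$
\card L(y)=\card L(4^{1-k}z)+4n .
$$
It therefore suffices to show that, for $k$ large, the set
$$
A_k:=\{z\in A:\ 4^{1-k}z\in S_2\}
$$
has positive measure; its image $t_k+4^{-k}A_k$ then lies in $S_{4n+2}$ and has positive measure.

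\emph{Density step.} This is the step requiring an estimate, and I would handle it with Lemma~\ref{lem:density}. Set $\rho_k:=4^{1-k}\cdot 2/3$. By scaling,
$$
\leb\big(\{z\in(0,2/3]:4^{1-k}z\notin S_2\}\big)=4^{k-1}\,\leb\big((0,\rho_k)\setminus S_2\big)=\tfrac23\cdot\frac{\leb((0,\rho_k)\setminus S_2)}{\rho_k}.
$$
By Lemma~\ref{lem:density} the right-hand side tends to $0$ as $k\to\infty$. Choose $k$ so large that this quantity is less than $\leb(A)$. Then $\leb(A_k)\ge \leb(A)-\leb(\{z\in(0,2/3]:4^{1-k}z\notin S_2\})>0$, which completes the proof.
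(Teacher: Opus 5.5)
Your proposal is correct and follows the paper's proof: you apply the maps $z\mapsto 1/2+z/4$ (into $I_2$) and $z\mapsto t_k+4^{-k}z$ (into $I_k$) to $A=S_{2n}\cap(1/6,2/3]$, and use Lemma~\ref{lem:density} to give $\{z\in A:4^{1-k}z\in S_2\}$ positive measure for large $k$. The differences are cosmetic: your scaling $\rho_k=4^{1-k}\cdot 2/3$, allowing $k\ge 3$, and your explicit appeal to Lemma~\ref{lem:borel} for measurability.
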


\begin{proof}
Put $A:=S_{2n}\cap (1 /6 ,2 /3 )$.
By Lemma~\ref{lem:normalization}, we have
$\leb(A)>0$.
Define a function on $(0,2/3)$ by
$$
	D(z):=\frac{1}{2}+\frac{z}{4}.
$$
Then $D(A) \subset I_2$.
For any $z\in A$, $\Phi\big(D(z)\big)=z$. By Lemma~\ref{lem:allaart},
$$
	\card L\big(D(z)\big)=2\sum_{j=0}^{\infty}\card L(4^jz)=2 \card L(z)=4n,
$$
where the second equality follows from $z>1/6$.
Thus $D(A)\subset S_{4n}$ and
$$
	\leb(S_{4n})\geq\leb\big(D(A)\big)=\frac{1}{4}\leb(A)>0.
$$

For every $k\in\Z^+$, write $\rho_k:=4^{1-k} $ and
$$
	A_k:=\{z\in A:\rho_kz\in S_2\}.
$$
Since $A_k \subset A\subset(0,1)$, Lemma~\ref{lem:density} gives
$$
 0\leq \lim_{k \to \infty} \leb(A\setminus A_k)
= \lim_{k \to \infty} \frac{\leb\big(\rho_k A\setminus \rho_k A_k \big)}{\rho_k}
\leq \lim_{k \to \infty} \frac{\leb\big((0,\rho_k)\setminus S_2\big)}{\rho_k}
=0.
$$
Hence $\leb(A_k)>0$ for all sufficiently large $k$.

Fix $k\geq 4$ such that $\leb (A_k) >0$.
Define
$$
	O_k(z):=t_k+\frac{z}{4^k}, \qquad z\in A_k.
$$
Since $z<1$, $t_k <O_k(z) < t_k  +4^{-k} < t_{k-1}$.
Therefore $O_k(z)\in I_k$, with
$$
\Psi\big( O_k(z) \big)=\rho_k z \in S_2
\quad\text{and}\quad
\Phi\big( O_k(z) \big)= z \in S_{2n}.
$$
Since $z>1/6$, Lemma~\ref{lem:reduction} gives
$$
\card L\big(O_k(z)\big) =\card L(\rho_kz)+2\card L(z) =4n+2.
$$
Therefore $O_k(A_k)\subset S_{4n+2}$ and
$$
\leb(S_{4n+2}) \geq \leb\big(O_k(A_k)\big) = 4^{-k} \leb(A_k) >0.
$$
Then the proof is complete.
\end{proof}

\begin{proof}[Proof of Theorem~\ref{thm:main}]
By Lemma~\ref{lem:allaart-S2}, we have $\leb(S_2)>0$.
For every $n\geq2$, write $n=2q$ or $n=2q+1$ with $1\leq q<n$.
We obtain $\leb(S_{2n})>0$ by recursively applying Lemma~\ref{lem:binary}.
\end{proof}

\bibliographystyle{amsplain}

\end{document}